\def\R{{\Bbb R}}
\def\N{{\Bbb N}}
\def\R{{\Bbb R}}
\def\C{{\Bbb C}}
\def\P{{\Bbb P}}
\numberwithin{equation}{section}
\newcommand {\Cal}{\mathcal}
\newtheorem{lemma}{Lemma}[section]
\newtheorem{theorem}[lemma]{Theorem}
\def\leq{\leqslant}
\begin{document}
\title{{A property of the spherical derivative of an entire curve in complex projective space}}
\date{ Nguyen Thanh Son and Tran Van Tan} 
\maketitle
\begin{abstract}  We establish a  type of the Picard's theorem for entire curves in $P^n(\C)$ whose spherical derivative vanishes  on the inverse images of  hypersurface targets. Then, as a corollary, we prove that there is an union $D$ of finite number of hypersurfaces in the complex projective space $P^n(\C)$ such that for every entire curve $f$ in $P^n(\C)$, if  the spherical derivative
$f^{\#}$ of $f$  is bounded  on $ f^{-1}(D)$, then $f^{\#}$ is bounded on the entire complex plane, and hence, $f$ is a Brody curve.\\

\noindent2010 {\it Mathematics Subject Classification.} 32A19, 32H30, 32H25.\\
{\it Key words.}  Brody curve,   Nevanlinna theory.
\end{abstract}
\section{Introduction}
The five-point theorem of Lappan \cite{L1} states that a meromorphic function $ f$  in the unit disc
$\triangle:=\{z\in\C: |z|<1\}$  is normal if there are five distinct values $a_1,\dots,a_5$ such that
$$\sup\{(1-|z|^2)f^{\#}(z): z\in f^{-1}\{a_1,\dots,a_5\}\} < \infty.$$
The counterpart result for meromorphic functions  in $\C$ was given by Timoney (\cite{Ti}, Theorem 2): For a meromorphic function $f$ in $\C$, and for distinct five values $a_1,\dots,a_5$, if  $\sup\{f^{\#}(z): z\in f^{-1}\{a_1,\dots,a_5\}\} < \infty$ then $f^{\#}$ is upper bounded on $\C$.

 By using the Cartan-Nochka's second main theorem in Nevanlinna theory and techniques of uniqueness  problem, in \cite{Tan}, the second named author generalized these  results to the case of holomorphic mappings into $P^n(\C)$ and $n(2n+1)+2$ hyperplanes in general position in $P^n(\C)$.  For the case of hypersurfaces, there are several second main theorems have been established, such as in \cite{DT, DT1,ES, Q,QA,R2,R3}, however, they  are not enough for our purpose. In this paper, by obtaining  a  type of the Picard's theorem for entire curves in $P^n(\C)$ whose spherical derivative vanishes  on the inverse images of  hypersurface targets,  we extend the above mentioned results to the case of $3n\binom{n+d}{n}-n+1$ hypersurfaces in general position in $P^n(\C)$, $n\geq 2$, where $d$ is   the smallest  least common multiple of the degrees of these hypersurfaces. Because of limitation of techniques of the second main theorem for hypersurface targets, our following estimate on the number of targets is weaker than the one obtained by the second named author in \cite{Tan} for hyperplanes.

Let $f$ be  an entire curve in the complex projective space $P^n(\C).$ The spherical derivative $f^{\#}$ of $f$  measures the length distortion from the Euclidean metric in $\C$ to the Fubini-Study metric in $P^n(\C).$ The explicit formula  is
\begin{align*}
 f^{\#}=(|f_0|^2+\cdots+|f_n|^2)^{-2}\cdot\sum_{0\leq i<j\leq n}|f_if_j'-f_jf_i'|^2,
\end{align*}
where $(f_0,\dots,f_n)$ is a reduced representation of $f.$

An entire curve $f$ is called a Brody curve  if its spherical derivative is bounded. This is equivalent to normality of the family $\mathcal F:=\{f_a(z)=f(z+a): a\in\C\}.$

 The interested reader is referred  to \cite{D,E1,E2,MT,T,W} for many interesting results on Brody curves.

 Hypersurfaces $D_1,\dots,D_q$ $(q\geq n+1)$ in $P^n(\C)$ are said to be in general position if $\cap_{i=0}^nD_{j_i}=\varnothing,$ for all $1\leq j_0<\cdots<j_n\leq q.$

Our  main results can be stated as follows.
\begin{theorem}\label{SMT} Let $D_1,\dots, D_q$ be hypersurfaces  in general position in $P^n(\C)$, $n\geq 2$. Denote by $d$  the smallest  common multiple of  $\deg D_1,\dots, \deg D_q.$ Assume that there exists a  non-constant  entire curve  $f$   in $P^n(\C)$   such that for each $j\in\{1,\dots,q\},$  either $f(\C)\subset D_j$ or $f^{\#}=0$ on $f^{-1}(D_j).$ Then $q\leq 3n\binom{n+d}{n}-n$.
\end{theorem}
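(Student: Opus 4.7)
The plan is to argue by contradiction: assume $q\geq 3n\binom{n+d}{n}-n+1$, and derive a contradiction. First, the indices split as $J_1=\{j:f(\C)\subset D_j\}$ and $J_2=\{j:f^{\#}\equiv 0 \text{ on } f^{-1}(D_j)\}$; general position of $D_1,\dots,D_q$ forces $|J_1|\leq n$. I would reduce to the case where $f$ is algebraically non-degenerate in $P^n(\C)$ and $f(\C)\not\subset D_j$ for all $j$, by discarding the indices in $J_1$ and inducting on the dimension of the smallest algebraic subvariety of $P^n(\C)$ containing $f(\C)$. The key elementary observation is then that the hypothesis forces every zero of the entire function $P_j\circ f$ to be at least double: taking a reduced representation $(f_0,\dots,f_n)$ of $f$, the identity $f^{\#}(z_0)=0$ is equivalent to $f'(z_0)=\lambda\cdot f(z_0)$ in $\C^{n+1}$ for some $\lambda\in\C$, and if $P_j$ denotes the homogeneous polynomial defining $D_j$ and $z_0\in f^{-1}(D_j)$, Euler's identity yields
\begin{equation*}
(P_j\circ f)'(z_0)=\sum_{k=0}^n\frac{\partial P_j}{\partial x_k}(f(z_0))\cdot f_k'(z_0)=\lambda\cdot\deg(D_j)\cdot P_j(f(z_0))=0.
\end{equation*}

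Next I would carry out the standard Veronese reduction. Set $N:=\binom{n+d}{n}-1$ and let $v_d\colon P^n(\C)\to P^N(\C)$ be the Veronese embedding of degree $d$, and put $F:=v_d\circ f$. Replacing each $P_j$ by $Q_j:=P_j^{d/\deg D_j}$, a homogeneous polynomial of degree $d$ with the same zero locus as $P_j$, produces hyperplanes $H_j\subset P^N(\C)$ with $H_j\circ F=Q_j\circ f$, every zero of which has multiplicity at least $2d/\deg D_j\geq 2$. Algebraic non-degeneracy of $f$ in $P^n(\C)$ forces linear non-degeneracy of $F$ in $P^N(\C)$, while general position of $\{D_j\}$ in $P^n(\C)$ translates into the property that any $n+1$ of the hyperplanes $\{H_j\}$ have intersection disjoint from the Veronese subvariety $v_d(P^n(\C))$. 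Note however that the $H_j$'s typically fail general position in $P^N(\C)$ itself once $q>\binom{n+d}{n}$.

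The final and technical step is to apply a second main theorem of Cartan--Nochka type to $F\colon\C\to P^N(\C)$ and the family $\{H_j\}$ in this subgeneral-position configuration, obtaining an inequality of the shape
\begin{equation*}
\bigl(q-c(n,d)-\epsilon\bigr)\, T_F(r)\leq \sum_{j=1}^q N^{[L]}_{H_j}(r,F)+o(T_F(r))
\end{equation*}
with explicit constants $c(n,d)$ and $L$. Combining this with the estimate $N^{[L]}_{H_j}(r,F)\leq(L/2)\,T_F(r)+O(1)$, which follows from the multiplicity-at-least-two conclusion, and summing over $j$ bounds the right-hand side by $(qL/2)\,T_F(r)+O(1)$; for suitable $c(n,d)$ and $L$, rearranging produces $q\leq 3n\binom{n+d}{n}-n$, contradicting the standing assumption. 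The main obstacle---and what the authors identify as the reason their bound is weaker than in the hyperplane case of \cite{Tan}---is precisely calibrating this second main theorem: the existing hypersurface second main theorems cited in the introduction do not directly give the required combination of subgeneral-position constant and small truncation level, so one must adapt the Cartan--Nochka method, most likely via a Wronskian construction tailored to the Veronese configuration, in order to extract the precise combinatorial bound $3n\binom{n+d}{n}-n$.
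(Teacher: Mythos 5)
Your opening moves coincide with the paper's: the reduction via the smallest variety $V$ containing $f(\C)$, the substitution $Q_j=G_j^{d/\deg D_j}$, and above all the observation that $f^{\#}(a)=0$ forces $f'(a)=\lambda f(a)$, whence by Euler's identity $(Q_j(f))'(a)=0$ at every zero of $Q_j(f)$, so all such zeros have multiplicity at least $2$. The fatal problem is your endgame, and it is quantitative, not cosmetic. You propose to insert the bound $N^{[L]}_{H_j}(r,F)\leq (L/2)\,T_F(r)+O(1)$ into a Nochka-type second main theorem with truncation level $L$ and rearrange. But for $L\geq 2$ the multiplicity information yields no improvement on an $L$-truncated counting function whatsoever: if every zero has multiplicity exactly $2$, then $N^{[L]}=N$ for all $L\geq 2$, so the only available estimate is the trivial $N^{[L]}_{H_j}\leq T_F+O(1)$, and the inequality $(q-c)T_F\leq\sum_j N^{[L]}_{H_j}+o(T_F)$ collapses to the vacuous $q-c\leq q$. (Your factor $L/2$ is correct but is weaker than the trivial bound once $L\geq 2$; rearranging $q(1-L/2)\leq c$ constrains nothing.) The saving of one half only bites when $L=1$, and no second main theorem with truncation level $1$ exists in this setting; that is exactly the obstruction you defer to the ``final and technical step.'' The paper does run your argument verbatim, but only in its Case 1, where $H_V(d)=2$ and the truncation level genuinely equals $1$.

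In the main case the paper needs two further ideas that are absent from your sketch. First, the proportionality $f'(a)=\lambda f(a)$ makes the first two rows of the Wronskian matrix (of a basis of $\C[x_0,\dots,x_n]_d/I(V)_d$ evaluated along $f$) proportional at $a$, so by a Laplace expansion the Wronskian acquires one \emph{extra} zero at every point of $\cup_j f^{-1}(D_j)$ beyond the standard truncation count; this turns the coefficient in front of $\sum_j N^{[H_V(d)-1]}$ into $1-\tfrac{1}{(k+1)(H_V(d)-1)}<1$. Second, and this is what actually closes the argument, the level-one counting functions are bounded by a completely separate device: choosing a linear form $\gamma_0$ with $\gamma_0(f)\neq 0$ on $\cup_j f^{-1}(D_j)$, the function $\bigl(\gamma_t(f)/\gamma_0(f)\bigr)'$ vanishes identically on that set while its characteristic function is at most $2T_f(r)+o(T_f(r))$ by the lemma on the logarithmic derivative, giving $\sum_j N^{[1]}_f(r,D_j)\leq 2(n-q+q_0)\,T_f(r)+o(T_f(r))$. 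A smaller remark: your plan to reach algebraic non-degeneracy by ``inducting on the dimension'' of $V$ does not reduce to the same statement, since the restricted hypersurfaces are only in subgeneral position on $V$; the paper instead keeps $V$ and works with Nochka weights in a $(k+1)$-dimensional quotient of $\C[x_0,\dots,x_n]_d/I(V)_d$. Without substitutes for these steps your outline cannot produce any bound on $q$.
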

\begin{theorem}\label{Brody} Let $f$ be an entire curve in $P^n(\C)$, $n\geq 2.$ Assume that there are hypersurfaces $D_1,\dots,D_q$ in general position
in $P^n(\C)$  such that $f^{\#}$ is bounded on $\cup_{j=1}^q f^{-1}(D_j).$ Then $f$ is a Brody curve if
$q>3n\binom{n+d}{n}-n,$ where $d$ is  the smallest   common multiple of  $\deg D_1,\dots, \deg D_q.$
\end{theorem}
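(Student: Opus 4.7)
My plan is to argue by contradiction using a Brody-type rescaling together with the Picard-type Theorem \ref{SMT}. Suppose $f^{\#}$ is unbounded on $\C$; I will extract a non-constant limit curve $g$ that meets the hypotheses of Theorem \ref{SMT}, which will force $q\leq 3n\binom{n+d}{n}-n$, contradicting the assumption $q>3n\binom{n+d}{n}-n$.

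First I would select $z_k\in\C$ with $f^{\#}(z_k)\to\infty$ and consider the reparametrizations $g_k(z):=f(z_k+z/M_k)$, where $M_k$ is chosen (following Zalcman/Brody) so that $g_k^{\#}(0)=1$ and $g_k^{\#}\leq 1+o(1)$ on arbitrarily large discs. Marty's normality criterion then yields, along a subsequence, a locally uniform limit $g_k\to g$ in the Fubini-Study metric, producing an entire curve $g\colon\C\to P^n(\C)$ with $g^{\#}(0)=1$ and $g^{\#}\leq 1$; in particular $g$ is non-constant.

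Next I would verify the hypothesis of Theorem \ref{SMT} for $g$: for each $j$, either $g(\C)\subset D_j$ or $g^{\#}\equiv 0$ on $g^{-1}(D_j)$. Fix $j$, let $Q_j$ be a homogeneous polynomial defining $D_j$, and assume $g(\C)\not\subset D_j$, so that $Q_j\circ g\not\equiv 0$. For any $z_0\in g^{-1}(D_j)$, Hurwitz' theorem applied to $Q_j\circ g_k\to Q_j\circ g$ produces $w_k\to z_0$ with $g_k(w_k)\in D_j$, i.e., $f(z_k+w_k/M_k)\in D_j$. The boundedness hypothesis on $f$ then supplies a uniform bound $f^{\#}(z_k+w_k/M_k)\leq C$, and the reparametrization identity relating $g_k^{\#}$ to $f^{\#}$ (a ratio with $M_k\to\infty$ in the denominator) yields $g_k^{\#}(w_k)\to 0$, whence $g^{\#}(z_0)=0$.

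The main obstacle is the Hurwitz step in the previous paragraph, which requires choosing reduced representations of the $g_k$ whose coordinates converge to a reduced representation of $g$ so that $Q_j$ may be evaluated in the limit; this is the standard but technically delicate feature of projective Brody rescaling. Once this is settled, Theorem \ref{SMT} applies to $g$ and the hypersurfaces $D_1,\dots,D_q$ (which remain in general position with the same common multiple $d$) to deliver $q\leq 3n\binom{n+d}{n}-n$, the desired contradiction. Therefore $f^{\#}$ is bounded on $\C$, i.e., $f$ is a Brody curve.
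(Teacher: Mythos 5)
Your proposal is correct and follows essentially the same route as the paper: a rescaling argument (the paper invokes the Aladro--Krantz/Zalcman lemma for the translation family $\{f(z+a):a\in\C\}$ where you use Brody reparametrization, but the effect is identical) produces a non-constant limit curve $g$ satisfying, for each $j$, either $g(\C)\subset D_j$ or $g^{\#}=0$ on $g^{-1}(D_j)$, via exactly the same Hurwitz-plus-vanishing-scale-factor computation, after which Theorem \ref{SMT} yields the contradiction. The technical point you flag about choosing reduced representations converging to a reduced representation of $g$ in the Hurwitz step is legitimate but standard, and the paper does not elaborate on it either.
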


\noindent {\bf Acknowledgements:} This research was supported by the Vietnam National Foundation for Science and Technology Development (NAFOSTED).
\section{Notations}
Let $\nu$ be a nonnegative divisor on $\C.$ For each positive integer (or $+\infty) $ $ p,$ we define the
counting function of $\nu$ (where multiplicities are truncated by $p)$ by
\begin{align*}N^{[p]}(r, \nu) :=\int_{1}^r\frac{n_{\nu}^{[p]}}{t}dt
\quad (1 < r < \infty)
\end{align*}
where $n_{\nu}^{[p]}(t)=\sum_{|z|\leq t}\min\{\nu(z), p\}.$ For brevity we will omit the character $ [p]$  in the counting
function  if $p = +\infty.$

For a  meromorphic function $\varphi$  on $\C$, we denote by $(\varphi)_0$ the divisor of zeros of $\varphi$. We have the following Jensen's formula for the counting function:
\begin{align*} N(r, (\varphi)_0)-N(r, \left(\frac{1}{\varphi}\right)_0)=\frac{1}{2\pi}\int_{0}^{2\pi}\log\left|(\varphi(re^{i\theta})\right|d\theta +O(1).
\end{align*}
Let $f$ be a holomorphic mapping of $\C$ into $P^n(\C)$ with a reduced representation $f=(f_0,\dots,f_n).$ The characteristic  function $T_f(r)$ of $f$ is defined by
\begin{align*}T_f(r):=\frac{1}{2\pi}\int_{0}^{2\pi} \log\Vert f(re^{i\theta})\Vert d\theta-\frac{1}{2\pi}\int_{0}^{2\pi} \log \Vert f(e^{i\theta})\Vert d\theta,\quad r>1,
\end{align*}
where $\Vert f\Vert=\max\limits_{i=0,\dots,n}|f_i|.$

Let $D$ be a hypersurface in $P^n(\C)$ defined by a homogeneous polynomial $Q\in\C[x_0,\dots,x_n]$, $\deg Q=\deg D.$ Asumme that $f(\C)\not\subset D,$  then the counting function of $f$ with respect to $D$ is defined by  $N_f^{[p]}(r,D):=N^{[p]}(r, (Q(f_0,\dots, f_n))_0).$

Let $V \subset P^n(\C)$ be a projective variety of dimension $k.$ Denote by $I(V)$
the prime ideal in $\C[x_0, . . . , x_n]$ defining $V.$ Denote by $\C[x_0, . . . , x_n]_m$ the vector
space of homogeneous polynomials in $\C[x_0, . . . , x_n]$ of degree $m$ (including 0). Put
$I(V)_m := \C[x_0, . . . , x_n]_m \cap I(V).$

The Hilbert function $H_V$ of $V$ is defined by
$H_V(m) := \dim\frac{ \C[x_0, . . . , x_n]_m}{I(V)_m}.$

Consider two integer numbers $q, N$ satisfying   $q\geq N+1,$ $N\geq k.$
Hypersurfaces $D_1,\dots,D_q$  in $P^n(\C)$ are said to be in $N$-subgeneral position with respect to $V$ if $V\cap(\cap_{i=0}^ND_{j_i})=\varnothing,$ for all $1\leq j_0<\cdots<j_N\leq q.$

\section{Proof of Theorems}
\begin{lemma}[\cite{AK}, Lemma 3.1] \label{Zalcman}Let $\mathcal F$ be a family of holomorphic mappings of $\C$ into $P^n(\C).$ If $\mathcal F$ is not normal then there exist  sequences $\{z_k\}\subset \C$ with $z_k\to z_0\in \C,$  $\{f_k\}\subset\mathcal F$, $\{\rho_k\}\subset\R$ with $\rho_k\to 0^+,$  such that $g_k(\zeta) := f_ k(z_k + \rho_k \zeta)$  converges uniformly on compact subsets of $\C$ to a nonconstant
holomorphic mapping $g$ of $\C$ into $P^n(\C).$
\end{lemma}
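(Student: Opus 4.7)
The plan is to carry out Zalcman's classical renormalization argument, adapted to holomorphic maps into $P^n(\C)$. The crucial prerequisite is a Marty-type criterion in this projective setting: a family $\mathcal F$ of holomorphic mappings $\C\to P^n(\C)$ is normal (in the compact-open topology induced by the Fubini--Study metric) if and only if the spherical derivatives $\{f^{\#}:f\in\mathcal F\}$ are locally uniformly bounded on $\C$. Compactness of $P^n(\C)$ eliminates the ``escape to infinity'' possibility that complicates the meromorphic situation, and the remaining equivalence is a standard Ascoli--Arzel\`a argument using the explicit formula for $f^{\#}$ recorded in the introduction.

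First, using Marty's criterion together with the failure of normality, I would select a point $z_0\in\C$, a radius $r>0$, and a sequence $\{f_k\}\subset\mathcal F$ such that $\max_{|z-z_0|\leq r}f_k^{\#}(z)\to\infty$. I would then apply Zalcman's maximum trick to the auxiliary function $\psi_k(z):=(r-|z-z_0|)f_k^{\#}(z)$ on the closed disk $\{|z-z_0|\leq r\}$. Each $\psi_k$ is continuous and vanishes on the bounding circle, so its maximum is attained at some interior point $z_k$; moreover $\psi_k(z_k)\geq r\cdot\max f_k^{\#}/2\to\infty$. Set $\rho_k:=1/f_k^{\#}(z_k)$; then $\rho_k\to 0^+$, and after passing to a subsequence we may assume $z_k\to z_0$. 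Define $g_k(\zeta):=f_k(z_k+\rho_k\zeta)$.

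The heart of the argument is the rescaling estimate. By the chain rule, $g_k^{\#}(\zeta)=\rho_k f_k^{\#}(z_k+\rho_k\zeta)$, so in particular $g_k^{\#}(0)=1$. For $|\zeta|\leq R$ with $\rho_k R<r-|z_k-z_0|$, the maximality of $\psi_k$ at $z_k$ together with the triangle inequality $|z_k+\rho_k\zeta-z_0|\leq|z_k-z_0|+\rho_k|\zeta|$ yields
\begin{align*}
g_k^{\#}(\zeta)=\frac{f_k^{\#}(z_k+\rho_k\zeta)}{f_k^{\#}(z_k)}\leq\frac{r-|z_k-z_0|}{r-|z_k-z_0|-\rho_k|\zeta|}\longrightarrow 1.
\end{align*}
Hence $\{g_k^{\#}\}$ is locally uniformly bounded on $\C$, and Marty's criterion (applied in reverse) makes $\{g_k\}$ a normal family; extracting a locally uniformly convergent subsequence gives a holomorphic map $g:\C\to P^n(\C)$. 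Passing to the limit in $g_k^{\#}(0)=1$ yields $g^{\#}(0)=1>0$, so $g$ is non-constant.

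The main obstacle I anticipate is setting up the Marty criterion cleanly in the projective setting---in particular verifying the equicontinuity of $\mathcal F$ with respect to the Fubini--Study metric from a uniform bound on $f^{\#}$, where one must work with reduced representations to make the chordal estimate intrinsic---and confirming that the maximizer $z_k$ lies strictly inside the disk (which is automatic because $\psi_k$ vanishes on the boundary while $\sup\psi_k\to\infty$). Once these items are dispatched, the Zalcman scheme produces the desired non-constant limit $g$ directly.
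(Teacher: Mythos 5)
The paper does not prove this lemma at all---it is quoted verbatim from Aladro--Krantz \cite{AK}, so there is no internal proof to compare against. Your argument is the standard Zalcman renormalization scheme (Marty criterion for maps into the compact target $P^n(\C)$, the weighted maximum trick, the rescaling bound $g_k^{\#}(\zeta)\leq (r-|z_k-z_0|)/(r-|z_k-z_0|-\rho_k|\zeta|)\to 1$ since $\rho_k/(r-|z_k-z_0|)=1/\psi_k(z_k)\to 0$, then Marty in reverse), which is essentially the proof in the cited reference, and it is correct. One small imprecision: from $\max_{|z-z_0|\leq r}f_k^{\#}\to\infty$ alone you cannot conclude $\max\psi_k\geq \tfrac{r}{2}\max f_k^{\#}$, since the maximizer of $f_k^{\#}$ could drift to the boundary circle where the weight $r-|z-z_0|$ vanishes; the standard fix is to invoke non-normality on the half-radius disk, so that $\max\psi_k\geq \tfrac{r}{2}\max_{|z-z_0|\leq r/2}f_k^{\#}\to\infty$. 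This is a routine repair, not a gap in the method.
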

\begin{lemma}[\cite{Fu}, Theorem 2.4.11]\label{N1} Let $\Cal S$ be a complex  vector space of dimension $k+1.$ Consider positive integer numbers $N,q$  satisfying $N\geq k,  q\geq 2N-k+1.$ Let $v_1,\dots, v_q$ be non-zero vectors in $\Cal S.$ Assume that every set of $N+1$ vectors in $\{v_1,\dots,v_q\}$ has  rank  $k+1.$  Then, there exist constants $\omega_1,\dots,\omega_q$ and $\Theta$ satisfying the following conditions:

(i) $\quad 0<\omega_j\leq\Theta\leq 1$ for all $j\in\{1,\dots,q\};$

(ii) $\quad\sum_{j=1}^q\omega_j\leq \Theta(q-2N+k-1)+k+1;$

(iii) $\quad\frac{k+1}{2N-k+1}\leq \Theta\leq \frac{k+1}{N+1};$

(iv)$\quad$ if $R\subset \{1,\dots,q\}$ and $\#R= N+1,$ then $\sum_{j\in R}\omega_j\leq k+1.$
\end{lemma}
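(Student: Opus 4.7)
The plan is to follow Nochka's classical combinatorial argument, as developed in Fujimoto's book, which constructs the weights $\omega_j$ and the constant $\Theta$ by a recursive extremal procedure on the linear dependence lattice of $v_1,\dots,v_q$.

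First I would introduce the rank function $r(A) := \dim \mathrm{span}\{v_j : j \in A\}$ on subsets $A \subseteq \{1,\dots,q\}$. The hypothesis that every $N+1$ of the $v_j$ have rank $k+1$ says precisely that $r(A) = k+1$ whenever $|A| \geq N+1$. The weights are then built by a ``peeling'' procedure. At the first stage, choose an extremal nonempty subset $A_1 \subseteq \{1,\dots,q\}$ that minimizes a ratio of the form $\frac{k+1 - r(A_1^c)}{|A_1|}$ (or an equivalent formulation involving $r(A_1)$ and $|A_1|$), and assign to every index in $A_1$ the common weight equal to that ratio, which will play the role of $\Theta$. Then pass to the quotient space $\mathcal{S}/\mathrm{span}\{v_j : j \in A_1\}$ and repeat the procedure on the remaining indices, producing weights $\omega_j \leq \Theta$ at each subsequent stage.

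Having completed the construction, I would verify the four conclusions. Property (i) follows because the ratios at each stage lie in $(0,1]$ and the first one is the maximal among them. Property (iii) is a direct computation once $\Theta$ is expressed in terms of the stage sizes, using that the recursion forces $\Theta$ between the extreme cases $\frac{k+1}{2N-k+1}$ and $\frac{k+1}{N+1}$. Property (iv) is the key geometric restriction: for any $R$ with $|R| = N+1$ the sum $\sum_{j \in R}\omega_j$ telescopes across the stages intersecting $R$ and is bounded by $k+1$ because $r(R) = k+1$ supplies exactly that much total rank. Property (ii) is then derived by summing the weight contributions across all stages and collapsing the resulting telescoping expression using the complementary identity between cardinality and corank at each step.

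The main obstacle is reconciling (ii) and (iv) simultaneously: the peeling ratios must be extremal enough to make the global bound in (ii) tight, yet small enough that no $(N+1)$-subset exceeds the cap $k+1$ in (iv). The crucial technical ingredient is a matroid-type inequality on $r(\cdot)$ certifying that the extremal subset chosen at each stage is sharp in both directions; this is really the heart of the argument and would most naturally be handled by an LP-duality argument on the submodular rank function, as in Fujimoto's presentation, rather than by a direct combinatorial recursion.
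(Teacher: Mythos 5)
The paper does not prove this lemma at all: it is quoted verbatim from Fujimoto's book ([Fu], Theorem 2.4.11) and used as a black box, so there is no in-paper argument to measure your sketch against. Judged on its own terms, your proposal correctly identifies the strategy behind Nochka's lemma --- an extremal filtration built from the rank function $r(A)=\dim\operatorname{span}\{v_j: j\in A\}$, with weights read off from successive slopes --- but it is an outline of where a proof would live rather than a proof. The extremal quantity is never pinned down (you write ``a ratio of the form $\frac{k+1-r(A_1^c)}{|A_1|}$ (or an equivalent formulation)''), and the choice matters: in the standard construction one builds a chain $\varnothing=B_0\subsetneq B_1\subsetneq\cdots\subsetneq B_s$ along the convex hull of the graph of $r$, the successive slopes $\frac{r(B_{i+1})-r(B_i)}{\#B_{i+1}-\#B_i}$ are monotone, $\Theta$ is the \emph{residual} slope $\frac{k+1-r(B_s)}{2N-k+1-\#B_s}$ attached to the last stage and is the \emph{largest} weight, assigned to every index outside $B_s$ --- whereas you describe $\Theta$ as the first-stage ratio. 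Getting this orientation right is exactly what makes (iii) and (iv) come out.

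More importantly, the verification of (ii) and (iv) --- which you yourself flag as ``really the heart of the argument'' --- is left entirely to an unspecified ``LP-duality argument on the submodular rank function.'' Property (iv) requires showing that for every $R$ with $\#R=N+1$ the weighted sum is controlled by $r(R)=k+1$; this rests on the submodularity inequality $r(A\cup B)+r(A\cap B)\leq r(A)+r(B)$ together with the monotonicity of the successive slopes, and neither is stated or used concretely in your sketch. Property (ii) in Fujimoto's formulation is in fact essentially an identity forced by the definition of $\Theta$ as the residual slope, not something obtained by ``collapsing a telescoping expression'' in an unspecified way. As written, the proposal would not compile into a proof without supplying precisely the combinatorial core it defers; citing Fujimoto (or Nochka, or the Chen--Ru--Vojta exposition), as the paper does, is the honest alternative.
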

We call constants $\omega_j$ $(1\leq j\leq q)$ and $\Theta$ satisfying properties (i) to (iv) in Lemma \ref{N1} Nochka weights and Nochka constant associated to vectors $v_j$'s.
\begin{lemma}[\cite{Fu}, Proposition 2.4.15] \label{N2} Let $\Cal S$ be a complex  vector space of dimension $k+1.$ Consider positive integer numbers $N,q$  satisfying $N\geq k,  q\geq 2N-k+1.$ Let $v_1,\dots, v_q$ be non-zero vectors in $\Cal S.$ Assume that every set of $N+1$ vectors in $\{v_1,\dots,v_q\}$ has  rank  $k+1.$ Let $\omega_1,\dots,\omega_q$ be Nochka weights for $v_1,\dots, v_q,$ respectively.  Consider   arbitrary non-negative real constants $E_1,\dots, E_q$. Then for each  subset $R$ of $\{1,\dots, q\}$ with $\#R= N+1$, there exists $R'\subset R$ such that $\{v_j, j\in R'\}$ is a basis of $\Cal S$ and
\begin{align*}\sum_{j\in R}\omega_jE_j\leq\sum_{j\in R'}E_{j}.
\end{align*}
\end{lemma}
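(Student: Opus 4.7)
The plan is to sort the indices in $R$ by decreasing $E_j$, extract a basis $R'$ by a Steinitz-exchange greedy procedure, and use Abel summation to reduce the inequality to a ladder of partial-sum bounds on the Nochka weights.

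Concretely, after relabeling assume $R = \{j_1, \ldots, j_{N+1}\}$ with $E_{j_1} \geq E_{j_2} \geq \cdots \geq E_{j_{N+1}}$, and define indices $1 = l_1 < l_2 < \cdots < l_{k+1} \leq N+1$ recursively by choosing $l_t$ to be the smallest index (greater than $l_{t-1}$) at which $v_{j_{l_t}}$ leaves the span of $v_{j_{l_1}}, \ldots, v_{j_{l_{t-1}}}$. The standing hypothesis that every $N+1$ of the $v_j$'s have rank $k+1$ ensures that exactly $k+1$ such indices are produced, so $R' := \{j_{l_1}, \ldots, j_{l_{k+1}}\}$ is a basis of $\mathcal{S}$. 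Setting $r_i := \dim \mathrm{span}\{v_{j_1}, \ldots, v_{j_i}\}$, which equals $\#\{t : l_t \leq i\}$, and putting $E_{j_{N+2}} := 0$, Abel's summation yields
\begin{align*}
\sum_{j \in R}\omega_j E_j = \sum_{i=1}^{N+1}(E_{j_i} - E_{j_{i+1}})\sum_{l=1}^{i}\omega_{j_l} \qquad\text{and}\qquad \sum_{j \in R'} E_j = \sum_{i=1}^{N+1}(E_{j_i} - E_{j_{i+1}})\, r_i.
\end{align*}
Since $E_{j_i} - E_{j_{i+1}} \geq 0$, the desired inequality reduces to the ladder
\begin{align*}
\omega_{j_1} + \cdots + \omega_{j_i} \leq r_i, \qquad i = 1, \ldots, N+1.
\end{align*}

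The hard part lies in this last family of estimates. The top case $i = N+1$ is immediate from property (iv) of Lemma \ref{N1}, since $r_{N+1} = k+1$. The cases $i < N+1$, however, are not formal consequences of properties (i)--(iv) alone; they are instances of the matroid-rank bound $\sum_{j \in S}\omega_j \leq \dim \mathrm{span}\{v_j : j \in S\}$ for arbitrary $S \subseteq \{1, \ldots, q\}$. This bound is intrinsic to the construction of the Nochka weights in the proof of Lemma \ref{N1}, where the weights are produced as a fractional covering of the matroid of linear dependencies of $v_1, \ldots, v_q$. The main obstacle is therefore to verify this matroid-rank property, either by revisiting Nochka's construction or by an auxiliary combinatorial argument exploiting general position together with (iv); once it is available, the Abel-summation reduction above closes out the proof cleanly.
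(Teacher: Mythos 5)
The paper gives no proof of this lemma at all---it is quoted directly from Fujimoto's book (Proposition 2.4.15)---so there is no internal argument to measure yours against. That said, your reduction is in fact the standard proof of that proposition: sorting $R$ so that $E_{j_1}\geq\cdots\geq E_{j_{N+1}}$, extracting a basis greedily, and converting the claim by Abel summation into the partial-sum estimates $\omega_{j_1}+\cdots+\omega_{j_i}\leq r_i=\dim\operatorname{span}\{v_{j_1},\dots,v_{j_i}\}$ is exactly how it is argued in the literature. The formal manipulations you carry out are all correct: the two Abel identities, the nonnegativity of the increments $E_{j_i}-E_{j_{i+1}}$, and the fact that the greedy procedure terminates with a full basis $R'$ because the $N+1$ vectors indexed by $R$ have rank $k+1$.

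The gap you flag is nonetheless real as your argument stands, and it is the entire content of the lemma. The ladder $\sum_{l\leq i}\omega_{j_l}\leq r_i$ for $i<N+1$ genuinely does not follow from properties (i)--(iv) of Lemma \ref{N1} as the paper states them: (iv) is asserted only for sets of cardinality exactly $N+1$, where the span is automatically all of $\Cal S$, and (i)--(iii) give only $\sum_{l\leq i}\omega_{j_l}\leq i(k+1)/(N+1)$, which can exceed $r_i$ when, say, two of the vectors are proportional and $k<N<2k+1$. You correctly diagnose that the missing ingredient is the subset-rank inequality $\sum_{j\in S}\omega_j\leq\dim\operatorname{span}\{v_j:j\in S\}$, but you leave it unproved, so what you have written is a proof sketch rather than a proof. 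The good news is that no new combinatorial work is needed to close it: in Fujimoto's actual Theorem 2.4.11 (and in Nochka's original theorem) property (iv) is stated in precisely this stronger form---for every $R$ with $0<\#R\leq N+1$ one has $\sum_{j\in R}\omega_j\leq\dim\operatorname{span}\{v_j:j\in R\}$---and the paper's Lemma \ref{N1} has merely recorded a weakened special case. So the right way to finish is not to ``revisit Nochka's construction'' but simply to invoke the full statement of the weight theorem from the cited source; with that one citation your Abel-summation reduction becomes a complete and clean proof.
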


\begin{proof}[Proof of Theorem \ref{SMT}]  Denote by $V$ the smallest algebraic  variety in $P^n(\C)$ containing $f(\C),$ then $k:=\dim V\geq 1.$

 Without loss of the generality, we may assume that $f(\C)\not\subset D_j$ for all $j\in\{1,\dots,q_0\}$ and $f(\C)\subset D_j$ for all $j\in\{q_0+1,\dots,q\},$ for some $q_0\leq q.$ Since $D_1,\dots,D_q$ are in general position in $P^n(\C)$,  we have that $q-q_0+k\leq n,$ and
  $D_1, \dots, D_{q_0}$ are  in $n-(q-q_0)$-subgeneral position with respect to  $V.$

   For each $j\in\{1,\dots,q_0\},$ let $G_j\in\C[x_0,\dots,x_n]$   be a homogeneous polynomial defining $D_j, $ $\deg G_j=\deg D_j,$ and set $Q_j=G_j^{\frac{d}{\deg D_j}}.$ Then  $\deg Q_j=d,$ for all $j\in\{1,\dots,q_0\}.$
Let $(f_0,\dots, f_n)$ be a reduced representation of $f,$  and set $Q_j(f):=Q_j(f_0,\dots,f_n).$

\noindent We have
\begin{align} \label{223}N^{[p]}(r, (Q_j(f))_0)\leq \frac{d}{\deg D_j}N_f^{[p]}(r,D_j),
\end{align}
where $p$ is an integer number or infinity.

Set $\Cal V:= \frac{\C[x_0,\dots,x_n]_d}{I(V)_d}.$ Since $V\not\subset D_j,$ we have that $Q_1,\dots,Q_{q_0}$ are  non-zero vectors in $\Cal V.$
 For each $R\subset\{1,\dots,q_0\},$ $\#R=n+1-(q-q_0),$ since $(\cap_{j\in R}D_j)\cap V=\varnothing$ we get that
\begin{align}\label{1.6}
\{Q_j, j\in R\}\; \text{has rank not less than}\; k+1\; \text{ in the}\; \C\text{-vector space}\; \Cal V.
\end{align}
Denote by $\mathcal E$ the set of all subsets $J\subset\{1,\dots,q_0\}$ such that $1\leq\# J\leq k+1$ and  vectors $Q_{j}$, $j\in J$ are linearly independent in $\Cal V.$ Then $\cup _{E\in\Cal E}E=\{1,\dots,q_0\}.$

\noindent It is easily to see that, there exist  a subset $\{v_1,\dots,v_{H_V(d)-k-1}\}\subset \C[x_0,\dots,x_n]_d$ such that for all  $J\in \Cal E,$ vectors $v_1,\dots,v_{H_V(d)-k-1}, Q_j, j\in J$ are linearly independent in $\Cal V$. Here, if $H_V(d)=k+1,$ we choose $\{v_1,\dots,v_{H_V(d)-k-1}\}=\varnothing.$
 Denote by $\big< v_1,\dots,v_{H_V(d)-k-1}\big>$  the subspace of $\Cal V$  which is generated  by $v_1,\dots,v_{H_V(d)-k-1}.$ Then $Q_1,\dots, Q_{q_0}$ are non-zero vectors in the $(k+1)$-dimensional vector space $\frac{\Cal V}{\big< v_1,\dots,v_{H_V(d)-k-1}\big>}.$ Furthermore, by (\ref{1.6}), for each $R\subset\{1,\dots,q_0\}$ with $\#R=n+1-(q-q_0)$, there exists $R'\subset R,$ $\#R'=k+1$ such that $Q_j,j\in R'$ form a basis in  $\frac{\Cal V}{\big< v_1,\dots,v_{H_V(d)-k-1}\big>}.$

Applying Lemmas \ref{N1}-\ref{N2}, there are Nochka weights and the  Nochka constant $\omega_1,\dots, \omega_{q_0},\Theta$ associated  to vectors $Q_1,\dots,Q_{q_0}$ in $\frac{\Cal V}{\big< v_1,\dots,v_{H_V(d)-k-1}\big>}.$

We now consider polynomials   $P_1,\dots,P_{H_V(d)}$ in $\C[x_0,\dots,x_n]_d$ such that they form  a basis in $\Cal V=\frac{\C[x_0,\dots,x_n]_d}{I(V)_d}.$


  Denote by $W$ the Wronskian of $P_1(f_0,\dots,f_n),\dots, P_{H_V(d)}(f_0,\dots,f_n)$. Since $f$ is algebraically non-degenerate, we have $W\not\equiv 0.$

  Since hypersurfaces $D_1,\dots, D_{q_0}$ are in $n-(q-q_0)-$subgeneral position with respect to $V$, there is a positive constant $c$ such that for all  $J\subset\{1,\dots,q\}$, $\# J=n+1-(q-q_0),$ and for all $z\in\C,$ we have
$$\max_{j\in J} |Q_j(f(z)|\geq c\Vert f(z)\Vert^d.$$
For each $z\in \C$, we take $K_z\subset \{1,\dots,q_0\},$ $\#K_z=q_0-n-1$ such that $|Q_j(f(z))|\geq c\Vert f(z)\Vert^d$ for all $j\in K_z,$  and put $J_z:=\{1,\dots,q_0\}\setminus K_z.$
Hence, there are positive constants $c_1,c_2$ such that for all $z\in \C$
\begin{align}\label{22.10}
\log\frac{\prod_{j=1}^{q_0}|Q_j(f(z))|^{\omega_j}}{|W(z)|}&=\log\prod_{j\in K_z}|Q_j(f(z))|^{\omega_j}-\log|W(z)|\notag\\
&\quad+\log\prod_{j\in J_z}|Q_j(f(z))|^{\omega_j}\notag\\
&\geq(\omega_1+\cdots+\omega_{q_0})\log\Vert f(z)\Vert^d -\log|W(z)|-c_1\notag\\
&\quad-\log\frac{\prod_{j\in J_z}\Vert f(z)\Vert^{d\omega_j}}{\prod_{j\in J_z}|Q_j(f(z))|^{\omega_j}}\notag\\
&\geq d(\omega_1+\cdots+\omega_{q_0})\log\Vert f(z)\Vert -\log|W(z)|-c_2\notag\\
&\quad-\sum_{j\in J_z}\log\left(\frac{\Vert f(z)\Vert^d\Vert Q_j\Vert}{|Q_j(f(z))|}\right)^{\omega_j}
\end{align}
where $\Vert Q_j\Vert$ is the sum of absolute values of coefficients of $Q_j.$

\noindent On the other hand, by Lemma \ref{N2}, there is a subset $T_z\subset J_z,$ $\#T_z=k+1$ such that the polynomials  $Q_j (j\in T_z)$ form a basis in  $\frac{\Cal V}{\big< v_1,\dots,v_{H_V(d)-k-1}\big>},$  and
$$\sum_{j\in J_z}\log\left(\frac{\Vert f(z)\Vert^d\Vert Q_j\Vert}{|Q_j(f(z))|}\right)^{\omega_j}\leq \sum_{j\in T_z}\log\frac{\Vert f(z)\Vert^d\Vert Q_j\Vert}{|Q_j(f(z))|}.$$
Since $P_1,\dots, P_{H_V(d)}$ form a basis of $\frac{\C[x_0,\dots,x_n]_d}{I(V)_d}$ and $Q_j, j\in T_z$ are linearly independent in $\frac{\C[x_0,\dots,x_n]_d}{I(V)_d},$  there is a subset $\tau_z\subset\{1,\dots,H_V(d)\}$ with  $\#\tau_z=H_V(d)-(k+1)$ such that $ P_i, Q_j\; (i\in\tau_z, j\in T_z)$ form a basis of $\frac{\C[x_0,\dots,x_n]_d}{I(V)_d}.$
Hence, by (\ref{22.10}) there is a positive constant $c_3$ such that for all $z\in \C$
\begin{align}
\log\frac{\prod_{j=1}^{q_0}|Q_j(f(z))|^{\omega_j}}{|W(z)|}&\geq d(\omega_1+\cdots+\omega_{q_0})\log\Vert f(z)\Vert - \log|W(z)|-c_2\notag\\
&\quad-\sum_{j\in T_z}\log\frac{\Vert f(z)\Vert^d\Vert Q_j\Vert}{|Q_j(f(z))|}\notag\\
&\geq d(\omega_1+\cdots+\omega_{q_0})\log\Vert f(z)\Vert -c_3\notag\\
&\quad-\log\frac{|W(z)|\cdot\Vert f(z)\Vert^{dH_V(d)}}{\prod_{j\in T_z}|Q_j(f(z))|\cdot\prod_{j\in \tau_z}|P_j(f(z))|}\notag\\
&\geq d(\omega_1+\cdots+\omega_{q_0}-H_V(d))\log\Vert f(z)\Vert-c_3\notag\\
&\quad-\sum_{(T,\tau)}\log^+\frac{|W(z)|}{\prod_{j\in T}|Q_j(f(z))|\cdot \prod_{i\in \tau}|P_i(f(z))|},\label{28.1}
\end{align}
where the last sum is taken over all pairs $(T,\tau) $  satisfying the following conditions:

i) $T\subset \{1,\dots, q_0\}, $ $\#T= k+1,$ and $Q_j, j\in T$ are linearly independent  in $\frac{\C[x_0,\dots,x_n]_d}{I(V)_d};$

ii) $\tau\subset\{1,\dots,H_V(d)\},$ $\#\tau=H_d(V)-(k+1)$ and $P_i, Q_j$  $( i\in \tau, j\in T$) form a basis of $\frac{\C[x_0,\dots,x_n]_d}{I(V)_d}.$

Applying integration on both sides of (\ref{28.1}), using Lemma \ref{N1},  the Jensen's Lemma and the Logarithmic derivative Lemma, we have
\begin{align}\Big\Vert&\sum_{j=1}^{q_0}\omega_j N(r,(Q_j(f))_0)-N(r, (W)_0)\notag\\
&\quad\quad\geq d\left(\Theta[q_0-2(n-(q-q_0))+k-1]+k+1-H_V(d)\right)T_f(r)-o(T_f(r))\notag\\
&\quad\quad\geq d\Theta\left(q_0-\frac{[2(n-(q-q_0))-k+1]H_V(d)}{(k+1)}\right)T_f(r)-o(T_f(r)).\label{28.2}
\end{align}
Here, as usual, by the notation $“\Big\Vert P”$ we mean the assertion $P $ holds for all $ r \in[1, +\infty)$
excluding a Borel subset $E$ of $(1, +\infty )$ with $\int_{E}dr < +\infty.$

In order to estimate  the left side of (\ref{28.2}), we consider an arbitrary point $a\in \cup_{j=1}^{q_0}f^{-1}(D_j).$

 Since $D_1,\dots,D_{q_0}$ are in $n-(q-q_0)$-subgeneral position, there is a subset $R_a\subset\{1,\dots,q_0\}$, $\# R_a=n+1-(q-q_0)$, such that $f(a)\not\in D_j$ for all $j\in\{1,\dots,q_0\}\setminus R_a.$ Hence,
\begin{align}(Q_j(f))_0(a)=0\label{22.10.2}
\end{align}
for all $j\in\{1,\dots,q_0\}\setminus R_a.$

By Lemma \ref{N2}, there is a subset $\{j_{0},\dots,j_{k}\}\subset R_a$ such that  $Q_{j_{0}},\dots, Q_{j_{k}}$ form a basis in $\frac{\Cal V}{\big< v_1,\dots,v_{H_V(d)-k-1}\big>}$ and
\begin{align}&\sum_{j=1}^{q_0}\omega_j\max\{(Q_j(f))_0(a)-H_V(d)+1,0\}\notag\\
&\quad=\sum_{j\in R_a}\omega_j\max\{(Q_j(f))_0(a)-H_V(d)+1,0\}\notag\\
&\quad\leq\sum_{i=0}^k\max\{(Q_{j_i}(f))_0(a)-H_V(d)+1,0\} \label{28.3}.
\end{align}
Denote by $W_1$ the Wronskian of $v_1(f_0,\dots, f_n),\dots,v_{H_V(d)-k-1}(f_0,\dots, f_n)$, $ Q_{j_0}(f_0,\dots, f_n),\dots,Q_{j_{k}}(f_0,\dots, f_n).$ Since $ v_1,\dots,v_{H_V(d)-k-1}, Q_{j_0},\dots,Q_{j_k}$ form a basis of $\Cal V=\frac{\C[x_0,\dots,x_n]_d}{I(V)_d},$ there is a non-zero constant $c$ such that $$W_1=cW.$$
By the assumption, $f^{\#}(a)=0$, and hence,
\begin{align}(f_0(a):\cdots:f_n(a))=(f_0'(a):\cdots:f'_n(a)). \label{2201}
\end{align}

We distinguish two cases.

\noindent{\bf Case 1.} $H_V(d)=2$ (then $d=k=1$ and $W_1$ is the Wronskian of $Q_{j_0}(f),Q_{j_1}(f)).$

For each $j\in\{1,\dots,q_0\},$  if $Q_{j}(f(a))=0$ then  by (\ref{2201}) we have $(Q_{j}(f))'(a)=0.$
 Therefore, for each $j\in\{1,\dots,q_0\},$ we have
\begin{align}\label{242}
(Q_{j}(f))_0(a)=0 \;\text{or}\; (Q_{j}(f))_0(a)\geq 2.
\end{align}
By  (\ref{22.10.2}),  (\ref{242}),  and Lemma \ref{N1}, $(iv),$  for any  $a\in \cup_{j=1}^{q_0}f^{-1}(D_j),$ we have
\begin{align*}\sum_{j=1}^{q_0}&\omega_j(Q_j(f))_0(a)-(W)_0(a)\notag\\
&=\sum_{j\in R_a}\omega_j(Q_j(f))_0(a)-(W_1)_0(a)\notag\\
&\leq\sum_{j\in R_a}\omega_j(Q_j(f))_0(a)-\sum_{i=0,1}\max\{(Q_{j_i}(f))_0(a)-1,0\}\notag\\
&\leq\sum_{j\in R_a}\omega_j(Q_j(f))_0(a)-\sum_{j\in R}\omega_j\max\{(Q_j(f))_0(a)-1,0\}\notag\\
&=\sum_{j\in R_a}\omega_j\min\{(Q_j(f))_0(a),1\}\notag\\
&\leq \sum_{j\in R_a}\frac{\omega_j}{2}(Q_j(f))_0(a)\notag\\
&= \sum_{j=1}^{q_0}\frac{\omega_j}{2} (Q_j(f))_0(a).
\end{align*}
Hence, since $d=1$, we have
\begin{align}
\sum_{j=1}^{q_0}\omega_j N(r,(Q_j(f))_0)-N(r, (W)_0)&\leq \sum_{j=1}^{q_0}\frac{\omega_j}{2}N(r,  (Q_j(f))_0)\notag\\
&\leq\sum_{j=1}^{q_0}\frac{\omega_j}{2}T_f(r) +O(1).\label{08.20}
\end{align}
By (\ref{28.2}), (̣\ref{08.20}) and since $d=k=1, H_V(d)=2$, $ \Theta\geq \omega_j$, we have
\begin{align*}
\Big\Vert\;\left(q_0-\frac{4(n-(q-q_0))}{2}\right)T_f(r)-o(T_f(r))\leq\frac{q_0}{2}T_f(r).
\end{align*}
Therefore, $2q_0-4(n-(q-q_0))\leq q_0,$ hence, $q\leq 4n-3(q-q_0)\leq4n<3n\binom{n+d}{n}-n.$

\noindent {\bf Case 2.} $H_V(d)\geq 3.$

For the sake of convenience, we set $v_{H_V(d)-k+i}:=Q_{j_i},$ $i=0,1,\dots, k.$ Then, by (\ref{2201}) and  the Euler's formula, we have
\begin{align}\label{220}&\left((v_1(f))'(a):\cdots:(v_{H_V(d)}(f))'(a)\right)\notag\\
&\quad=\left(\sum_{s=0}^n\frac{\partial v_{1}}{\partial x_s}(f(a))\cdot f_s'(a):\cdots:\sum_{s=0}^n\frac{\partial v_{H_V(d)}}{\partial x_s}(f(a))\cdot f_s'(a)\right)\notag\\
&\quad=\left(\sum_{s=0}^n\frac{\partial v_{1}}{\partial x_s}(f(a))\cdot f_s(a):\cdots:\sum_{s=0}^n\frac{\partial v_{H_V(d)}}{\partial x_s}(f(a))\cdot f_s(a)\right)\notag\\
&\quad=\left(v_1(f(a)):\cdots:v_{H_V(d)}(f(a))\right).
\end{align}
Then
\begin{align}
(W_1)_0(a)\geq 1.\label{29.3}
\end{align}
By the Laplace expansion Theorem, we have
\begin{align}
W_1
&=\begin{vmatrix}v_1(f)&v_2(f)&\cdots&v_{H_V(d)}(f) \\ (v_1(f))'&(v_2(f))'&\cdots&(v_{H_V(d)}(f))' \\
\cdot&\cdot&\cdots&\cdot\\
\cdot&\cdot&\cdots&\cdot\\
\cdot&\cdot&\dots&\cdot\\
(v_1(f))^{(H_V(d)-1)}&(v_2(f))^{(H_V(d)-1)}&\cdots&(v_{H_V(d)}(f))^{(H_V(d)-1)}\end{vmatrix}\notag\\
&=\sum_{1\leq t<\ell\leq H_V(d)}(-1)^{t+\ell} \left(v_t(f)(v_\ell(f))'-v_\ell(f)(v_t(f))'\right) \det W_{t,\ell}\label{29.1}
\end{align}
where $W_{(t,\ell)}$ is the matrix which is defined from the  matrix $\left(v_i(f)^{(s)}\right)_{1\leq s+1,i\leq H_V(d)}$ by omitting two first rows and $t^{th}$, $\ell^{th}$ columns.

\noindent For each $1\leq t<\ell\leq H_V(d),$ it is clear that
\begin{align}
(\det W_{(t,\ell)})_0(a)\geq\sum_{i\in\{1,\dots,H_V(d)\}\setminus\{t,\ell\}}^{H_V(d)}\max\{(v_i(f))_0(a)-H_V(d)+1,0\}.\label{29.2}
\end{align}
We now prove that for all $1\leq t\ne\ell\leq H_V(d),$
\begin{align}\left(v_t(f)v'_\ell(f)-v_\ell(f)v'_t(f)\right)_0(a)\geq &\max\{(v_t(f))_0(a)-H_V(d)+1,0\}\notag\\
&+\max\{(v_\ell(f))_0(a)-H_V(d)+1,0\}+1.\label{29.4}
\end{align}

If $ (v_t(f))_0(a)\leq H_V(d)-1, (v_\ell(f))_0(a)\leq H_V(d)-1,$
then, the right side of (\ref{29.4}) is equal to 1, but by (\ref{29.3}), the left side of (\ref{29.4}) is not less than 1.

If $ (v_t(f))_0(a)\geq H_V(d)$ or $(v_\ell(f))_0(a)\geq H_V(d),$ without loss of generality, we assume that $ (v_t(f))_0(a)\geq H_V(d).$  Since, $H_V(d)\geq 3,$ we have
\begin{align*}
&\left(v_t(f)v'_\ell(f)-v_\ell(f)v'_t(f)\right)_0(a)\\
&\quad \geq [(v_t(f))_0(a)-H_V(d)+1]+(v_\ell(f))_0(a)+1\\
&\quad =\max\{(v_t(f))_0(a)-H_V(d)+1,0\}+(v_\ell(f))_0(a)+1\\
 &\quad\geq\max\{(v_t(f))_0(a)-H_V(d)+1,0\}+\max\{(v_\ell(f))_0(a)-H_V(d)+1,0\}+1.
\end{align*}
Hence, we get (\ref{29.4}).

\noindent By (\ref{29.1}), (\ref{29.2}) and (\ref{29.4}),  we have
\begin{align}(W)_0(a)&=(W_1)_0(a)\notag\\
&\geq \sum_{s=1}^{H_V(d)}\max\{(v_s(f))_0(a)-H_V(d)+1,0\}+1\notag\\
&\geq\sum_{s=H_V(d)-k}^{H_V(d)}\max\{(v_s(f))_0(a)-H_V(d)+1,0\}+1\notag\\
&\geq\sum_{i=0}^k\max\{(Q_{j_i}(f))_0(a)-H_V(d)+1,0\}+1. \label{29.5}
\end{align}
By  (\ref{22.10.2}), (\ref{28.3}), (\ref{29.5}), and Lemma \ref{N1}, $(iv),$  for each $a\in\cup_{j=1}^{q_0}f^{-1}(D_j),$ we have
\begin{align*}\sum_{j=1}^{q_0}&\omega_j(Q_j(f))_0(a)-(W)_0(a)\notag\\
&\leq\sum_{j\in R_a}\omega_j(Q_j(f))_0(a)-\sum_{i=0}^k\max\{(Q_{j_i}(f))_0(a)-H_V(d)+1,0\}-1\notag\\
&\leq\sum_{j\in R_a}\omega_j(Q_j(f))_0(a)-\sum_{j\in R}\omega_j\max\{(Q_j(f))_0(a)-H_V(d)+1,0\}-1\notag\\
&=\sum_{j\in R_a}\omega_j\min\{(Q_j(f))_0(a),H_V(d)-1\}-1\notag\\
&\leq \sum_{j\in R_a}\omega_j\left(\min\{(Q_j(f))_0(a),H_V(d)-1\}-\frac{1}{k+1}\min\{(Q_j(f))_0(a),1\}\right)\notag\\
&\leq \sum_{j\in R_a}\omega_j\left(1-\frac{1}{(k+1)\left(H_V(d)-1\right)}\right)\min\{(Q_j(f))_0(a),H_V(d)-1\}\notag\\
&= \sum_{j=1}^{q_0}\omega_j\left(1-\frac{1}{(k+1)\left(H_V(d)-1\right)}\right) \min\{(Q_j(f))_0(a),H_V(d)-1\}.
\end{align*}
Hence,
\begin{align*}&\sum_{j=1}^{q_0}\omega_j N(r,(Q_j(f))_0)-N(r, (W)_0)\\
&\quad\leq \sum_{j=1}^{q_0}\omega_j\left(1-\frac{1}{(k+1)\left(H_V(d)-1\right)}\right) N^{[H_V(d)-1]}(r,(Q_j(f))_0).
\end{align*}
Combining with (\ref{223}), (\ref{28.2}) and by the fact that $\omega_j\leq \Theta$ (Lemma \ref{N1}), we get
\begin{align*}
\Big\Vert &d\left(q_0-\frac{(2(n-(q-q_0))-k+1)H_V(d)}{(k+1)}\right)T_f(r)\\
&\quad\leq \sum_{j=1}^{q_0}\left(1-\frac{1}{(k+1)\left(H_V(d)-1\right)}\right) N^{[H_V(d)-1]}(r,(Q_j(f))_0) +o(T_f(r))\\
&\quad\leq \sum_{j=1}^{q_0}\left(1-\frac{1}{(k+1)\left(H_V(d)-1\right)}\right)\frac{d}{\deg D_j} N_f^{[H_V(d)-1]}(r,D_j) +o(T_f(r)).
\end{align*}
Then
\begin{align}
\Big\Vert &\left(q_0-\frac{(2(n-(q-q_0))-k+1)H_V(d)}{(k+1)}\right)T_f(r)\notag\\
&\quad\leq \left(1-\frac{1}{(k+1)\left(H_V(d)-1\right)}\right)\sum_{j=1}^{q_0}\frac{1}{\deg D_j} N_f^{[H_V(d)-1]}(r,D_j) +o(T_f(r)).\label{8.22}
\end{align}

  For each $z\in\cup_{j=1}^{q_0}f^{-1}(D_j)$, we define
$$\Cal C_z:=\{(c_0,\dots,c_n)\in\C^{n+1}: c_0f_0(z)+\cdots c_nf_n(z)=0\}.$$
Take a point $(b_0:\cdots:b_n)\in V$ and set $\Cal B_b:=\{(c_0,\dots,c_n)\in\C^{n+1}: c_0b_0+\cdots+c_nb_n=0\}.$
Since $\Cal C_z, \Cal B_b$ are vector subspaces of dimension $n$ in $\C^{n+1}$ and since $\cup_{j=1}^{q_0}f^{-1}(D_j)$  is at most countable, it follows that there exists
\begin{equation*}
(c_0,\dots, c_n)\in \C^{n+1}\setminus\left((\cup_{z\in\cup_{j=1}^q f^{-1}(D_j)}\Cal C_z)\cup\Cal B_b\right).
\end{equation*}
Set $\gamma_0:=c_0x_0+\cdots+\gamma_nx_n\in\C[x_0,\dots,x_n]_1.$ Then $\gamma_0\not\equiv 0$ on $V,$ and hence, there are $\gamma_1,\dots,\gamma_k$ in   $\C[x_0,\dots,x_n]_1$ such that $\gamma_0,\dots, \gamma_k$ have no common zero points in $V$ (note that $V$ is irreducible and $\dim V=k$).

 By our choice for $ (c_0,\dots, c_n), $
 \begin{align}
\{z: \gamma_0(f_0(z),\dots,f_n(z))=0\}\cap (\cup_{j=1}^{q_0}f^{-1}(D_j))=\varnothing.\label{t1}
\end{align}
Set $F:=(\gamma_0(f):\cdots: \gamma_{k}(f)):\C\to\P^k(\C).$ Then $F$ is linearly non-degenerate and $T_F(r)=T_f(r)+O(1).$

Since $f^{\#}$ vanishes on $ \cup_{j=1}^{q_0}f^{-1}(D_j)$, we have $$(f_0:\cdots:f_n)=(f'_0:\cdots:f'_n)\;\text{ on }\; \cup_{j=1}^{q_0}f^{-1}(D_j).$$
Hence, for each $z\in\cup_{j=1}^{q_0}f^{-1}(D_j)$, we have
\begin{align}(\gamma_0(f(z)):\cdots: \gamma_k(f(z)))=\left((\gamma_0(f))'(z):\cdots: (\gamma_k(f))'(z)\right).\label{t2}
\end{align}
Since    $F$ is  linearly nondegenerate,  there exists $t\in\{1,\dots,k\}$ such that
\[\det\begin{pmatrix}\gamma_0(f)&\gamma_{t}(f)\\
(\gamma_0(f))'&(\gamma_{t}(f))'\end{pmatrix}\not\equiv 0,\;\;\text{hence,}\left(\frac{\gamma_{t}(f)}{\gamma_0(f)}\right)'\not\equiv 0 .\]
By (\ref{t1}) and (\ref{t2}), we have
\begin{align}
\left(\frac{\gamma_{t}(f)}{\gamma_0(f)}\right)'=0\;\text{ on}\; \cup_{j=1}^{q_0}f^{-1}(D_j).\label{t3}
\end{align}
From the first main theorem and the lemma on logarithmic derivative of Nevanlinna theory for meromorphic functions, we get easily  that
\begin{align*}T_{\left(\frac{\gamma_{t}(f)}{\gamma_0(f)}\right)'}(r)&\leq 2T_{\left(\frac{\gamma_{t}(f)}{\gamma_0(f)}\right)}(r)+o\left(T_{\left(\frac{\gamma_{t}(f)}{\gamma_0(f)}\right)}(r)\right)\\
&\leq 2T_F(r)+o\left(T_F(r)\right)\\
&= 2T_f(r)+o\left(T_f(r)\right).
\end{align*}
On the other hand, for each $z_0\in\C,$ since $D_1,\dots, D_{q_0}$ are in $n-(q-q_0)$-subgeneral position in $V$, it follows that   there are at most $n-(q-q_0)$  of them passing  through  $f(z_0).$  Hence, by (\ref{t1}) and (\ref{t3}), we have
\begin{align*}\sum_{j=1}^{q_0}N^{[1]}_f(r,D_j)&\leq (n-q+q_0) N_{\left(\frac{\gamma_{t}(f)}{\gamma_0(f)}\right)'}(r)\notag\\
&\leq (n-q+q_0) T_{\left(\frac{\gamma_{t}(f)}{\gamma_0(f)}\right)'}(r)+O(1)\notag\\
&=2(n-q+q_0)T_f(r)+o(T_f(r)).
\end{align*}
Combining with (\ref{8.22}), we have
\begin{align*}
\Big\Vert &\left(q_0-\frac{(2(n-q+q_0)-k+1)H_V(d)}{k+1}\right)T_f(r)\\
&\quad\leq\left(1-\frac{1}{(k+1)(H_V(d)-1)}\right)\sum_{j=1}^{q_0}\frac{1}{\deg D_j}N_f^{[H_V(d)-1]}(r,D_j)+o(T_f(r))\\
&\quad\leq\left(H_V(d)-1-\frac{1}{(k+1)}\right)\sum_{j=1}^{q_0}\frac{1}{\deg D_j}N_f^{[1]}(r,D_j)+o(T_f(r))\\
&\quad\leq2(n-q+q_0)\left(H_V(d)-1-\frac{1}{(k+1)}\right)T_f(r)+o(T_f(r)).
\end{align*}
Therefore,
\begin{align*}
q &=q_0+(q-q_0)\\
&\leq\frac{(2(n-q+q_0)-k+1)H_V(d)}{k+1}+2(n-q+q_0)\left(H_V(d)-1-\frac{1}{(k+1)}\right)\\
&=(2n-1)H_V(d)+\frac{2(n+1)H_V(d)-2n}{k+1}\\
&\quad\quad\quad-(q-q_0)\left(\frac{2H_V(d)-2}{k+1}+2(H_V(d)-1)\right)\\
&\leq(2n-1)H_V(d)+\frac{2(n+1)H_V(d)-2n}{k+1}\\
&\leq(2n-1)H_V(d)+\frac{2(n+1)H_V(d)-2n}{2}\\
&\leq3n\binom{n+d}{n}-n.
\end{align*}
\end{proof}

\begin{proof}[Proof of Theorem \ref{Brody}]
Let $(f_0,\dots,f_n)$ be a reduced representation of $f.$ Assume that $f$ is not a Brody curve. Then the family $\mathcal F:=\{f_a(z):=f(a+z): a\in\C\}$ is not normal.  By Lemma \ref{Zalcman}, there exist sequences $\{z_k\}\subset \C$ with $z_k\to z_0\in \C,$  $\{a_k\}\subset\C$, $\{\rho_k\}\subset\R$ with $\rho_k\to 0^+,$  such that $g_k(\zeta) := f_ {a_k}(z_k + \rho_k \zeta)=f(a_k+z_k+\rho_k\zeta)$  converges uniformly on compact subsets of $\C$ to a nonconstant
holomorphic mapping $g$ of $\C$ into $P^n(\C).$

For each $j_0\in\{1,\dots,q\}$ satisfying  $g(\C)\not\subset D_{j_0}$, we  now prove that $g^{\#}(\xi)=0$ for all $\xi\in g^{-1}(D_{j_0}).$
To do this, we consider an arbitrary point  $\xi_0\in g^{-1}(D_{j_0}).$    By Hurwitz's Theorem there are values $\{\xi_k\}$  (for all $k$ sufficiently large), $\xi_k\to\xi_0$ such that $\xi_k\in g_k^{-1}(D_{j_0}),$ and hence, $a_k+z_k+\rho_k\xi_k\in f^{-1}(D_{j_0}).$ Then by the assumption, there is a positive constant $M$ such that, for all $k$ sufficiently large,
\begin{align*}f^{\#}(a_k+z_k+\rho_k\xi_k)\leq M.
\end{align*}
Then
\begin{align*}g^{\#}(\xi_0)&=\lim_{k\to\infty}g_k^{\#}(\xi_k)\\
&=\lim_{k\to\infty} \rho_kf^{\#}(a_k+z_k+\rho_k\xi_k)\\
&=0.
\end{align*}
Hence, for each $j\in\{1,\dots,q\}$, either $g(\C)\subset D_j$ or $g^{\#}=0$ on $g^{-1}(D_j).$ Therefore, since $n\geq 2$ and $q>(2dn+n)\binom{n+d}{n}-dn$, Theorem \ref{SMT} shows that $g$ is a constant curve; this is impossible.
\end{proof}

\noindent Nguyen Thanh Son\\
Department of Mathematics\\
  Hanoi National University of Education\\
 136-Xuan Thuy street, Cau Giay, Hanoi, Vietnam\\
e-mail:  k16toannguyenthanhson@gmail.com\\

\noindent Tran Van Tan\\
Department of Mathematics\\
  Hanoi National University of Education\\
 136-Xuan Thuy street, Cau Giay, Hanoi, Vietnam\\
e-mail: tantv@hnue.edu.vn

\begin{thebibliography}{99}

\bibitem{AK} G. Aladro and S. G.  Krantz \textit{ A criterion for normality in $\C^n$,} J. Math. Anal. Appl., \textbf{161}(1991), 1-8.


\bibitem{DT}  G. Dethloff and T. V. Tan,   \textit{A second main theorem
for moving hypersurface targets}, Houston J. Math., \textbf{37}(2011), 79-111.

\bibitem{DT1} G. Dethloff and T. V. Tan, \textit{ Holomorphic curves into algebraic varieties intersecting moving hypersurface targets,}  Acta Math Vietnam, \textbf{45}(2020), 291-308.



\bibitem{D} J. Duval, \textit{Sur le lemma de Brody,} Invent. Math., \textbf{173}(2008), 305-314.



\bibitem{E1} A. Eremenko, \textit{ Normal Holomorphic Curves from Parabolic Regions to Projective Spaces,}  preprint. Purdue University (1998). arXiv: 0710.1281.

  \bibitem{E2} A. Eremenko, \textit{ Brody curves omitting hyperplanes,} Ann. Acad. Sci. Fenn. Math., \textbf{35}(2010), 565-570.

  \bibitem{ES} A. Eremenko and M. Sodin, \textit{The value
distribution of meromorphic functions and meromorphic curves from
the point of view of potential theory}, St. Petersburg Math. J.
\textbf{3}(1992), 109-136.



\bibitem{Fu}H. Fujimoto, \textit{Value distribution theory of the Gauss map of minimal surfaces in $\R^m$,}
Vieweg-Verlag, Braunschweig, 1993.




      \bibitem{L1} P. Lappan, \textit{A criterion for a meromorphic function to be normal,} Comment. Math. Helv. \textbf{49}(1974), 492-495.

          \bibitem {Q}  S. D. Quang, \textit{Degeneracy second main theorems for meromorphic mappings into projective varieties with hypersurfaces,} Trans. Amer. Math. Soc., \textbf{371}(2019),  2431-2453.

          \bibitem {QA} S. D. Quang and D. P. An, \textit{Second main theorem and unicity of meromorphic mappings for hypersurfaces in projective varieties,} Acta Math. Vietnam., \textbf{42}(2017), 455-470.

          \bibitem{R2} M. Ru, \textit{A defect relation for holomorphic curves
intersecting hypersurfaces}, Amer. J. Math., \textbf{126}(2004),
215-226.

\bibitem{R3} M. Ru, \textit{Holomorphic curves into algebraic varieties},
 Ann. of Math., \textbf{169}(2009), 255-267.



\bibitem{MT} S. Matsuo and M. Tsukamoto, \textit{Brody curves and mean dimension,} J. Am. Math. Soc., \textbf{28}(2015), 159-182.

\bibitem{Ti} R. Timoney, \textit{A necessary and sufficient condition for Bloch functions}, Proc. Amer. Math. Soc.,  \textbf{71}(1977), 263-266.

\bibitem{Tan} T. V. Tan, \textit{Higher dimensional generalizations of some  theorems on normality of meromorphic functions,} Michigan. Math. J. (accepted), arXiv:2009.02697 [math.CV].

\bibitem{T} M. Tsukamoto, \textit{Mean dimension of the dynamical system of Brody curves,} Invent. Math.,
\textbf{211}(2018), 935-968.

\bibitem{W} J. Winkelmann, \textit{On Brody and entire curves,}  Bull. Soc. Math. Fr., \textbf{135}(2007), 25-46.
\end{thebibliography}
\end{document}